\newtheorem{theorem}{Theorem}[section]
\newtheorem{example}[theorem]{Example}
\newtheorem{conjecture}[theorem]{Conjecture}
\newcommand{\C}{\mathbb C}
\newcommand{\N}{\mathcal N}
\newcommand{\R}{\mathbb R}
\newcommand{\0}{{\bf 0}}
\newcommand{\x}{{\bf x}}
\newcommand{\bv}{{\bf v}}
\begin{document}

\title{ Exceptional rays and bilipschitz geometry of real surface singularities }


\author[D. O'Shea]{Donal O'Shea}
\address{President's Office, New College of Florida, 5800 Bayshore Road, Sarasota, FL 34243, USA}
\email{doshea@ncf.edu}

\author[L. Wilson]{Leslie Wilson}
\address{Mathematics Department, University of Hawai`i at Manoa, 2565 McCarthy Mall, Honolulu, HI 96822}

\keywords{Lipschitz Geometry, conic and non-conic singularities.}

\subjclass[2010]{14B05, 14J17, 14P10, 51F99}

\begin{abstract}
It is known that ambient bilipschitz equivalence preserves tangent cones.
This paper explores the behavior of the Nash cone and, in particular, exceptional
rays under ambient bilipschitz equivalence for real surfaces in $\R^3$ with
isolated singularity.
\end{abstract}

\maketitle

\section{Introduction}
In \cite{OW}, we extended work of Whitney \cite{W}, L{\^ e} \cite{L}, Teissier \cite{LT}, and others \cite{LH} on
limits of tangent spaces in the complex analytic setting to the case of real surfaces in $\R^3$.  In recent years,
there has been much progress on bilipschitz geometry for complex analytic surfaces (see, for example, \cite{BFN3}, \cite{BNP}), and it is again natural to ask whether, and how, results in the complex analytic case carry over to the reals.

To be more precise, and to fix notation, we let $V$ be a semialgebraic surface in $\R^3$ containing the origin ${\bf 0}$ (although all our results are stated in the semialgebraic 
category, they should be true in the subanalytic category as well). Two natural semialgebraic sets, the (Zariski) tangent cone, and the Nash cone, reflect the local geometry of $V$ at $\0$.  The tangent cone, $C \equiv CV \equiv C^+(V, {\bf 0})$, denotes the set of tangent vectors: that is, $\bv \in C$ if and only it there exist $\x_n \in V - \{\0\}, \x_n \rightarrow \0$ and a sequence of positive real numbers $t_n > 0$ such that $t_n\x_n \rightarrow \bv$.  The Nash cone,  $\N\equiv \N V \equiv \N(V, \0)$ denotes the set of 2-planes $T$ with the property that there exists a sequence of $\x_n$ of smooth points of $V$ (by which we mean points where $V$ is locally a 2-dimensional $C^1$ manifold) converging to $\0$ such that $T$ is the limit of tangent spaces to $V$ at the points $\x_n$. By passing to a subsequence if necessary, we can assume that the sequence $\{\x_n\}$ approaches the origin tangent to some ray $\ell$.  Necessarily, $\ell \subset C$.  We let $\N_\ell(V, \0) \subset \N(V,\0)$ denote the space of limits of tangent spaces that can be obtained as limits of tangent spaces along sequences tending to the origin tangent to $\ell$.  Whitney shows that if $V$ is algebraic, then $T\in \N_\ell$ implies $\ell \subset T$ (a result that extends easily to the case $V$ semialgebraic).

In \cite{OW}, we establish the analog of the L{\^ e}-Teissier theorem for algebraic surfaces $V\subset \R^3$ containing the origin $\0$;  that is, for surfaces given implicitly by an equation $\{  f = 0\}$ where $f\in \R[x,y,z]$ is a polynomial vanishing at $\0$.  However, the techniques and results of \cite{OW} apply to semialgebraic surfaces in $\R^3$.  In particular, we show that if  $V \subset \R^3$ is a reduced, semialgebraic surface with $\0$ an isolated singularity, then there exist finitely many rays $\ell_1, \ldots, \ell_r$ in $C$, called exceptional rays, with $\N_{\ell_i}$ connected, closed and one-dimensional.  For any other ray $\ell \in C - \{ \ell_1, \ldots, \ell_r\}$, $\N_\ell(V, \0)$ is a single point (that is a single plane), and   $\N_\ell(V, \0) = \N_\ell(C, \0)$.  An exceptional ray $\ell$ is said to be {\it full} if  $\N_\ell$ consists of the full pencil of planes in $\R^3$ containing $\ell$.   In the case of complex analytic surfaces, all exceptional lines are full, so that knowledge of the tangent cone and exceptional rays completely characterizes the Nash cone, so that L{\^ e}-Teissier's work \cite{LT} together with that of Birbrair, Neumann and Pichon \cite{BNP} allows one to sketch out the basics of a theory of bilipschitz geometry for complex surfaces.  What of real surfaces?

\section{Exceptional rays necessitated by the topology}

A map $h: V \rightarrow W$ between two metric spaces $(V, d_V)$ and $(W, d_W)$ is said to be {\it lipschitz} if $$d_W(h(x), h(y)) \le Kd_V(x, y)$$ for all $x,y \in V$ and some constant $K>0$, and {\it bilipschitz} if $h^{-1}$ exists and is lipschitz.  Equivalently, $h: V\rightarrow W$ is bilipschitz if and only if there exists $K> 0$ such that 
$$\frac{1}{K} d_V(x, y) \le d_W(h(x), h(y)) \le Kd_V(x, y).$$
A semialgebraic set $V$, real or complex, embedded in $\R^n$ or $\C^n$  has two natural metrics. One, the {\it intrinsic} or {\it inner} metric on $V$ is the metric  induced on $V$ by defining the distance $d_i(x,y)$ between two points $x$ and $y$ to be the 
infimum of the lengths of piecewise analytic arcs on $V$ joining $x$ and $y$.  The {\it outer} metric on $V$ defines the distance between any two points $x$ and $y$ to be their Euclidean distance $d_o(x,y)=|x-y|$ in the ambient space.  
Two such sets $V,W$ will be said to be inner (resp. outer) bilipschitz homeomorphic if they 
are bilipschitz homeomorphic with respect to the inner (resp. outer) metrics.  
If we don't say otherwise, we will mean outer.  In addition, if the outer bilipschitz 
homeomorphism is the restriction of a bilipschitz homeomorphism
 on a neighborhood of the sets in Euclidean space, we will say they are ambient 
bilipschitz homeomorphic. A homeomorphism is semialgebraic if its graph is semialgebraic.
All our bilipschitz homeomorphisms are assumed to preserve the origin.

In \cite{S}, Sampaio shows that two semialgebraic sets that are outer bilipschitz homeomorphic have outer bilipschitz homeomorphic tangent cones.  Although it is no longer quite true over the reals that exceptional rays together with the tangent cone completely characterize the Nash cone, the exceptional rays play an important role, and it is natural to ask whether bilipschitz homeomorphic semialgebraic sets have the same exceptional rays up to bilipschitz equivalence.     We shall see shortly that this is not the case.  Nonetheless, there are instances in which (see \cite{OW}) the topology of the tangent cone forces the existence of exceptional rays.  In such cases, two semialgebraic surfaces which are bilipschitz equivalent necessarily have exceptional rays. Three cases are worth singling out. All surfaces 
are understood to be in $\R^3$.

\begin{theorem}\label{thm: first}  
Let $V$ be a real semialgebraic surface with isolated singularity $\0$ and tangent cone $CV$.
 
a.  If $CV$ is a union of rays, each necessarily exceptional and full by \cite{OW},  then any semialgebraic surface  $W$ with isolated singularity $\0$ that is outer bilipschitz homeomorphic to $W$  has tangent cone consisting of the same number of rays, each of which is exceptional and full.  

b.  If $CV$ is bilipschitz homeomorphic to a half-plane, then the two diametrically opposite rays bounding $CV$ are necessarily exceptional and full by \cite{OW}.  Any semialgebraic surface $W$ with isolated singularity $\0$ that is outer bilipschitz homeomorphic to $V$ has two exceptional rays, both full, which together bound $CW$.  (It may also have other exceptional rays).

c.  If $CV$ is bilipschitz homeomorphic to   
three or more half planes meeting along a common axis (that is, a finite pencil with three or more half planes), then any semialgebraic surface $W$ with isolated singularity at $\0$ that is outer bilipschitz homeomorphic to $V$ has an exceptional ray.  

\end{theorem}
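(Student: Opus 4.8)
The plan is to derive all three statements from one mechanism. By Sampaio's theorem \cite{S}, an outer bilipschitz homeomorphism $V\to W$ induces an outer bilipschitz homeomorphism $\phi\colon CV\to CW$, which we may assume fixes $\0$. So it suffices to show that the feature of $CV$ singled out in each case --- being a union of $k$ rays; being a half-plane-like cone together with its pair of boundary rays; carrying a ray of multiplicity $\ge 3$ --- is a topological (hence bilipschitz) invariant of the cone germ, and then to feed this into the forced-exceptionality results of \cite{OW} applied to $W$.

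For (a): Hausdorff dimension is a bilipschitz invariant and $CV$ has dimension $1$, so $CW$ is a $1$-dimensional semialgebraic cone, hence a finite union of, say, $m$ rays; since $\phi$ restricts to a homeomorphism $CW\setminus\{\0\}\to CV\setminus\{\0\}$, comparing numbers of connected components gives $m=k$. As $\0$ is an isolated singularity of $W$ and $CW$ is a union of rays, \cite{OW} makes each of these $k$ rays exceptional and full. For (b), $CW$ is a $2$-dimensional semialgebraic cone, so its link $L=CW\cap S^2$ is a $1$-dimensional semialgebraic graph, and from $CW\setminus\{\0\}\cong L\times(0,\infty)$, the analogous description of the half-plane, and the fact that the valence of a point, the manifold-with-boundary locus and the homotopy type are all topological invariants, one reads off that $L$ is a single closed arc. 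Hence, away from $\0$, $CW$ is a $2$-manifold with boundary, that boundary being the two rays over the endpoints of $L$; by the half-plane case of \cite{OW} applied to $W$ these two rays are exceptional and full. (We do not, and cannot, claim they are antipodal: $\phi$ need not be linear --- which is precisely why the conclusion for $W$ is weaker than the hypothesis describing $CV$.) For (c), the link of a pencil of $m\ge 3$ half-planes is two vertices joined by $m$ edges, so the presence of a point of valence $\ge 3$ being topologically invariant, $L$ contains a vertex of valence $\ge 3$; equivalently $CW$ has a ray along which at least three sheets of the cone come together. By the pencil case of \cite{OW} applied to $W$, this forces $W$ to possess an exceptional ray.

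The crux in all three cases is the invariance step, and it is most delicate in (c). The homeomorphism $\phi$ produced by \cite{S} respects only the topology of the metric germ, not its radial/conical structure, so one cannot transport rays of $CV$ directly to $CW$; instead one must verify that the data actually used --- dimension, number of ends, existence of a point of a given valence in the link, the manifold-with-boundary locus --- are genuine topological invariants of the cone germ, which for semialgebraic cones in $\R^3$ is where the real work lies. The most subtle point is that a ray of $CW$ of multiplicity $\ge 3$ need not on its own produce two distinct limiting tangent planes, since the sheets along it may all be mutually tangent; so one cannot conclude exceptionality merely from $\N_\ell(CW,\0)\subseteq\N_\ell(W,\0)$ and the dichotomy of \cite{OW}, and it is exactly here that the finer analysis in \cite{OW} of how a surface approaches such a ray --- or, in the generic case, the fact that the $m\ge 3$ pencil planes already sit in $\N_\ell(CW,\0)$ as distinct members --- must be invoked. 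Granting that input, the proof is just the chain: Sampaio, dimension count, component/valence count, and the forced-exceptionality theorems of \cite{OW}.
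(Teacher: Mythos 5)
Your proposal is correct and follows essentially the same route as the paper's (very brief) proof: Sampaio's theorem gives a bilipschitz homeomorphism $CV\to CW$, the relevant features of the cone (number of rays, the boundary rays of the manifold-with-boundary locus, the presence of a ray where at least three sheets meet) are read off as topological invariants of the punctured cone, and the forced-exceptionality results of \cite{OW} are then applied to $W$. The invariance details and the caveat about the dichotomy in case (c) that you spell out are left implicit in the paper ("Parts b and c follow similarly"), so your write-up is a faithful, somewhat more careful version of the same argument.
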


\begin{proof}
a.  By Sampaio \cite{S}, $CW$ is bilipschitz homeomorphic to $CV$, hence a union of the same number of rays as $CV$.  By \cite{OW}, each ray is exceptional and full, and hence $\N V$ and $\N W$ are the union of the same number of pencils of planes (however property of these rays being opposite to each other is not necessarily preserved).  Parts b and c follow similarly.  In c, it is worth noting that the topological singular sets of $CV$ and $CW$ each
consist of two rays which must be exceptional, but there may also be other exceptional rays in $CV$ and/or $CW$.
\end{proof}

\section{The case when the tangent cone is a plane}

The most conspicuous case not addressed by Theorem \ref{thm: first} above occurs when a surface has tangent cone bilipschitz homeomorphic to a real plane.  In particular, we consider a surface $V\subset \R^3$ which is the graph $V = \Gamma f$ of a 
semialgebraic function $f: U \subset \R^2\rightarrow \R$, where $U$ is a neighborhood of $0$.  The surface $V$ is then homeomorphic to $U$ by orthogonal projection.
Even if $V$ has an exceptional ray, it may happen that $V$ is bilipschitz homeomorphic to a subset of the $\R^2$. An example is 
$V =\{z^3 = (x-kz)y^6,\, k\ge 0\}$ from \cite{OW}.  Here, $V$ is 
bilipschitz equivalent to its tangent cone $CV =\R^2$, and has exceptional line the $y$=axis with
Nash cone consisting of all planes containing the $y$-axis and slope in the $xz$-plane between 0 and $1/k$.   
 We omit the details for this example, but will instead give the details for a different family of examples below.

\begin{example}\label{ex:one}\end{example}
For any integers $a,b\ge 1$, the real algebraic variety
\begin{eqnarray*}
F(x,y,z) = (x^2+y^{2a})z -y^{2a+b} =0
\end{eqnarray*}
consists of the union of the $z$-axis with the graph of 
\begin{eqnarray*}
f(x,y)&=& \frac{y^{2a+b}}{x^2+y^{2a}} \quad \hbox{when}  (x,y) \ne (0,0)\\
                  f(0,0)&=& \quad 0.
\end{eqnarray*}
$V = \Gamma f$ is a semialgebraic set and, since $\nabla F = (0,0,0)$ only on the $z$-axis,  $V$ is an analytic manifold except at $(0,0,0)$.  Assume $b\ge 2$; then $CV=\R^2$, as the calculations below show.

Consider an analytic arc $A$ in $V$
not tangent to the $y$-axis, lying above the plane arc 
$$ \{ y = |x|^s\cdot\hbox{unit},\quad  x>0 \hbox{ or }x<0, s \hbox{ rational, } s\ge 1\}.$$
For simplicity, we restrict to the case $x>0$. We have
\begin{eqnarray*}
z&=& f(x,y(x))= \frac{x^{(2a+b)s}\cdot\hbox{unit}}{x^2+x^{2as}\cdot\hbox{unit}} = x^{(2a+b)s-2}\cdot\hbox{unit}\\
f_x &=&  \frac{ -2xy^{2a+b}}{(x^2+y^{2a+b})^2} =\frac{x\cdot x^{(2a+b)s}}{x^4}\cdot\hbox{unit} = x^{(2a+b)s-3}\cdot\hbox{unit}\\
&\rightarrow &0\quad \hbox{since} \ (2a+b)s- 3 \ge 1\\ 
f_y &=& {(2a+b)y^{2a+b-1}(x^2+y^{2a}) -2ay^{2a-1}y^{2a+b} } \over{(x^2 + y^{2a})^2}\\
&=&x^{(2a+b-1)s -2} \cdot \hbox{unit} + x^{(2a-1)s + (2a+b)s - 4}\cdot\hbox{unit}\\
&\rightarrow&0\quad \hbox{since}\  (2a+b-1)s- 2\ge 1\ \hbox{and}\  (2a-1)s + (2a+b)s - 4\ge 1.
\end{eqnarray*}
Thus $TV|_A \rightarrow \R^2$ as $x\rightarrow 0$, and $TV|_A \not\rightarrow \R^2$ can only occur for an arc $A$ tangent to the $y$-axis.  That is, the only possible exceptional rays are the positive
and negative parts of the $y$-axis.

Now consider an arc $A$ in $V$ tangent to the $y$-axis (and again assume for simplicity that $x\ge 0$), lying above the plane arc
 $ \{x = y^s\cdot\hbox{unit}\}, s>1, y\ge 0 \ \hbox{or} \ y\le 0.$  Along $A$, we have
\[
z=f(x(y),y) 
= \frac{y^{2a+b}}{ y^{2s}\cdot\hbox{unit} + y^{2a} }
=\begin{cases}
y^{2a+b-2s} \cdot \hbox{unit}\ \hbox{if}\  s\le a\\
y^b\cdot\hbox{unit}\ \quad\qquad  \hbox{if}\ s\ge a
\end{cases}.
\]
Both exponents are greater than $b \ge 2$.  We have
$$
f_y|_A = \frac{(2a+b)y^{2a+b-1}}{y^{2s}\cdot\hbox{unit} +y^{2a}} - \frac{2ay^{(2a+1)+(2a+b)}}{(y^{2s}\cdot\hbox{unit}+y^{2a})^2}.
$$
We set $w = 
\begin{cases} 
s\ \hbox{if} \ s\le a\\
a\  \hbox{if}\ s\ge a
\end{cases}
$
and write  
$y^{2s}\cdot\hbox{unit} + y^{2a} = y^{2w}\cdot\hbox{unit}$.  Hence
$$
f_y|_A = y^{2a+b-1-2w} \cdot\hbox{unit} + y^{4a+b-1-4w}\cdot\hbox{unit}.
$$
Since $w\le a$, both powers of $y$ are greater than of equal to $b-1 \ge 1$, whence
$$
f_y|_A \rightarrow 0 \quad \hbox{as}\quad y \rightarrow 0.
$$
So the limit of $TV|_A$ as $y\rightarrow 0$ is determined by
$$
f_x|_A = y^{2a+b+s-4w}\cdot\hbox{unit}.
$$
For fixed $y$, $f_x$ achieves a max or min only where $f_{xx}= 0$.  Computing, we have
$$
f_{xx} = -2y^{2a+b}\frac{(y^{2a}-3x^2)}{(x^2 + y^{2a})^3} = 0
$$
on  
$$
A: \{ x^2 = \frac{1}{3}y^{2a} \}\quad\hbox{so}\quad x = \pm\frac{1}{\sqrt{3}}y^a.
$$
So $s=a$ and 
$$
f_x|_A = \frac{y^a y^{2a+b}}{y^{4a}}\cdot\hbox{unit} = y^{b-a}\cdot\hbox{unit}.
$$
\bigskip

\noindent{\bf Case I.}  If $b>a$, then $f_x|_A \rightarrow 0$ as $y\rightarrow 0$, so $\N(y$-axis) $= \R^2$ and there is no exceptional ray.  
\bigskip

\noindent{\bf Case II.}  If $b=a$,
$$
f_x|_A = -2(\pm\frac{1}{\sqrt{3}}y^a)y^{2a+b}/(\frac{1}{3}y^{2a}+y^{2a})^2 = \mp\frac{2}{\sqrt{3}}\cdot\frac{9}{16} y^{b+a}.
$$
So $\N(y$-axis) consists of planes containing the $y$-axis with slope in the $x$-direction varying  over the interval [$-\frac{3\sqrt{3}}{8}, \frac{3\sqrt{3}}{8}$].  So we have an exceptional ray that is not a full pencil. 

For $b\ge a$, the tangent spaces to $\Gamma f$ have slopes which are bounded away from infinity, hence $f$ is a lipschitz function and $V = \Gamma f$ is ambient bilipschitz to $\R^2$.
\bigskip

\noindent{\bf Case III.}  If $b<a$, then 
$$
f_x|_A = y^{b-a}\cdot\hbox{unit} \rightarrow \pm\infty \quad\hbox{as}\quad y\rightarrow 0,
$$
so $\N(y\hbox{-axis})$ is a complete pencil. For each fixed $y \ne 0$, the restriction of
$V$ is a curve asymptotic to the $x$-axis with two inflection points occurring at the ends
of the curve in Figure \ref{fig:not1reg_s} (or its reflection about the $x$-axis if $y<0$ and $b$ is odd).
 
\begin{figure}[h] 
  \centering
  \includegraphics[width=3in,height=2.5in,keepaspectratio]{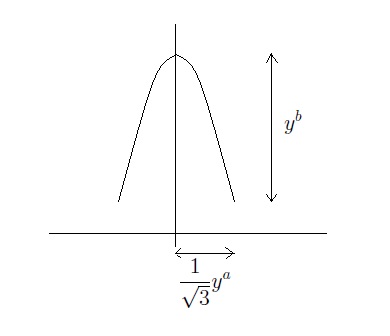}
  \caption{$F = 0$ in planar section $y = \hbox{const}$}
  \label{fig:not1reg_s}
\end{figure}

Denote the two inflection points by
$$
A_\pm(y) =  (\pm\frac{1}{\sqrt{3}}y^a, y, 
\frac{3}{4}y^b)) 
$$
and the maximum by
$B(y) = (0, y, y^b)$.  The Euclidean distance $d_o(A_+, A_-)$ between $A_+$ and $A_-$ is $\frac{2}{\sqrt{3}}y^a$.  The length of the graph connecting them is greater than or equal to 
$$
d_o(A_-,B) + d_o(A_+, B) = 2\sqrt{\frac{1}{3}y^{2a}+y^{2b}} = y^b\cdot\hbox{unit}.
$$
Let 
$d_i(A_-,A_+)$ be the intrinsic distance in $V$.  We will show that this is also $y^b\cdot\hbox{unit}$.  Were this not true, then for every $y$, there are geodesics $\gamma(y)$ from $A_-(y)$ to $A_+(y)$ with
$$
\ell(\gamma(y)) = d_i(A_-(y), A_+(y)) = o(y^b).
$$
There is a function $Y(y)$ so that 
$$
C(y) = (0, Y(s), f(Y(s))) \in \gamma.
$$
This point splits $\gamma$ into $\gamma_1$ from $A_-(y)$ to $C(y)$ and $\gamma_2$ from $C(y)$ to $A_+(y)$.   We have 
$$\ell(\gamma_i(y)) = o(y^b) \quad\hbox{for}\quad i = 1,2$$
and
\begin{eqnarray*}
\ell(\gamma_i(y)) &\ge& d_o(A_\pm(y), C(y))\\
&=&((y-Y(y))^2 + (A_\pm(y))^2 + f(Y(y))^2)^\frac{1}{2}
\end{eqnarray*}
So
\begin{eqnarray*}
&1)& |y-Y(y)| = o(y^b)\\
&2)& |A_\pm(y)| = y^a = o(y^b)\\
&3)& |f(Y(y))| = o(y^b)
\end{eqnarray*}
The second relation 2) holds because $b<a$.  By 1), $Y(y) = y + o(y^b)$, so that $f(Y(y)) = Y(y)^b\cdot\hbox{unit} = y^b\cdot\hbox{unit}$ violating $3)$.  Therefore,
$$
d_i(A_-(y), A_+(y)) = y^b\cdot\hbox{unit}\ >> \ d_o(A_-(y), A_+(y))
$$
so that $V$ is not $\ell$-regular, which implies $V$ cannot be outer bilipschitz to $\R^2$ (we discuss $\ell$-regularity in section 4).

One can ask whether   bilipschitz equivalence to the plane is only possible when the exceptional rays are not full.  The following example  shows that (even semialgebraic) ambient bilipschitz maps need not preserve full exceptional rays. 

\begin{example}\label{ex:two}\end{example}
Let $V$ be the surface
\begin{eqnarray*}
z^3 = f(x,y)^3 &=& (x^2 + y^2)(x^2-y^3) \\
&=& x^4 +x^2(y^2-y^3) - y^5.
\end{eqnarray*}
$V$ is semialgebraic, analytically nonsingular except at $\0$ and is the graph of $f$.  Calculations similar to those in Example \ref{ex:one} establish that 
$CV = \R^2$ 
and that the only possible exceptional ray is the positive $y$-axis.  

Consider the family of plane arcs $\{ \gamma_C \} $ with $C$ a constant 
$$\gamma_C = \{x = y^{\frac{3}{2}} + Cy^{\frac{7}{4}} \}.$$
On $\gamma_C$, 
$$
z^3 = y^6 + \ldots + (y^3 + 2Cy^{\frac{13}{4}} + \ldots)(y^2 - y^3) - y^5,
$$
where the dots $\ldots$ indicate higher order terms.  So
\begin{eqnarray*}
z^3 &=& 2Cy^\frac{21}{4} + \ldots \\
z &=& (2C)^\frac{1}{3}y^\frac{7}{4} + \dots\,.
\end{eqnarray*}
Let
$$
F(x,y,z) = x^4 + x^2(y^2-y^3) - y^5 - z^3.
$$
On an arc $\gamma_C$, we have 
\begin{eqnarray*}
F_x&=& 4x^3 + 2x(y^2-y^3)\\
&=& 4(y^\frac{9}{2} + \ldots) + 2(y^\frac{3}{2} + \ldots)(y^2 - y^3)\\
&=& 2y^\frac{7}{2} + \ldots ,\\
F_y&=& x^2(2y - 3y^2) - 5y^4\\
&=& (y^3 +\ldots)(2y-3y^2) - 5y^4\\
&=&-3y^4 + \ldots,\\
F_z&=& -3z^2 = -3(2C)^\frac{2}{3}y^\frac{7}{2} + \ldots\, .
\end{eqnarray*}
So, $\nabla F \rightarrow (2,0, -3(2C)^\frac{2}{3})$ as $y\rightarrow 0$ along the arc. 

Similarly, as $y\rightarrow 0$ along $x = -y^{\frac{3}{2}} + Cy^{\frac{7}{4}}$, we have 
$\nabla F \rightarrow (2,0, 3(2C)^\frac{2}{3})$.  We conclude that the limits of tangent planes to $V$ along these arcs consist of all planes containing the $y$-axis:  for each $C\ne 0$, the tangent plane intersected with the $xz$-plane has slope $\pm\frac{2}{(3(2C)^\frac{2}{3})}$ (so all nonzero numbers) and when $C=0$ the slope is infinite.  Thus, the positive $y$-axis is a full exceptional ray.  

We now show that $V$ is semialgebraic bilipschitz equivalent to $\R^2$.  Consider the plane arcs
\begin{eqnarray*}
\alpha_\pm:& x =\pm(y^\frac{3}{2} + y^\frac{7}{4})\\
\beta_\pm:& x =\pm(y^\frac{3}{2} - y^\frac{7}{4})
\end{eqnarray*}
Let $D_1, D_2, D_3$ be the regions bounded between the positive $y$-axis and $\beta_+$, between
$\beta_+$ and $\alpha_+$, and between $\alpha_+$ and $x=y,\, y\ge 0$, respectively.  
Then $f$ is lipschitz on $D_1$ and $D_3$ and $f_x > 0$ on $D_2$ 
(and $f_x\rightarrow \infty$ on $x=y^{3/2}$).  For each fixed $y$ the change in $x$ 
on $D_1$ (resp. $D_3$) is $\Delta x=y^{3/2}\cdot$unit  (resp.
$y^1\cdot $unit), while both the change in $x$ and the 
change in $z$ on $D_2$ is $\Delta z=y^{7/4}\cdot $unit. So the $\Delta z$ on $\Gamma f|D_2$ 
goes to zero faster than the $\Delta x$ on $D_1$ and $D_3$, which is the hypothesis for  Theorem 4.2 a).  
We can define $D_1^-, D_2^-, D_3^-$ similarly using $\beta_-, \alpha_-,$ and $\{ x=-y,\, y\ge 0\}$, which shows by Theorem 4.2 that all of $V$ is semialgebraic bilipschitz homeomorphic to $\R^2$.  

\begin{example}\label{ex:three}\end{example}  The following example is similar, but involves more arcs of inflection points with infinite slope tangent to the same exceptional ray.  Let
\begin{eqnarray*}
z^3&=& (x^2-y^5)(x^2-y^7)\\
&=& x^4-x^2(y^5 + y^7)+y^{12}.
\end{eqnarray*}
The tangent planes of infinite slope occur where the right hand side is equal to 0, so that $x=\pm y^\frac{5}{2}$ or $x=\pm y^\frac{7}{2}$.  Consider
$$
x= y^\frac{7}{2} + Cy^\frac{17}{4} 
\ \hbox{where} \ C\ \hbox{is a constant}.
$$
Along these arcs, 
\begin{eqnarray*}
z^3&=&(y^7 +\ldots -y^5)(\not{y^7} + 2Cy^\frac{31}{4} +\ldots -\not{y^7})\\
&=& -(2C)y^\frac{51}{4} +\ldots\ \hbox{and} \\
z^2 &=& (2C)^\frac{2}{3} y^\frac{17}{2} +\ldots.\\
\hbox{So}\ \ \ \  \ 3z^2z_x &=& 4x^3- 2x(y^5 +y^7)\\
&=& 4(y^\frac{21}{2} +\dots) -2(y^\frac{7}{2} +\ldots )(y^5 +\ldots)\\
&=& -2y^\frac{17}{2} +\ldots\\
z_x&=& \frac{-2}{3(2C)^\frac{2}{3}} +\ldots \rightarrow \frac{-2}{3(2C)^\frac{2}{3}} \hbox{as} \ y\rightarrow 0.
\end{eqnarray*}
Similarly, one obtains $z_y = y^\frac{5}{2}\cdot\hbox{unit}\rightarrow 0$ as $y\rightarrow 0$ so that the tangent planes to $V$ along these arcs approach all planes containing the $y$-axis with negative slope.

Now let $x = y^\frac{5}{2} + Cy^\frac{15}{4} $ with $C$ a constant.  We have
\begin{eqnarray*}
z^3 &=& (\not{y^5} + 2Cy^\frac{25}{4} +\ldots - \not{y^5})(y^5 + \ldots - y^7)\\
&=& 2Cy^\frac{45}{4} + \ldots
\end{eqnarray*}
and
\begin{eqnarray*}
z^2 &=& (2C)^\frac{2}{3}y^\frac{15}{2} + \ldots...\\
3z^2z_x &=& 4(y^\frac{15}{2} +\ldots) - 2(y^\frac{5}{2} +\dots)(y^5+\ldots)\\
&=& 2y^\frac{15}{2} + \ldots .
\end{eqnarray*}
We have
$$
z_x = \frac{2}{(2C)^\frac{2}{3}} + \ldots \rightarrow \frac{2}{(2C)\frac{2}{3}} \quad \hbox{as} \quad y \rightarrow 0.
$$
Moreover, $z_y = y^\frac{3}{2} \cdot\hbox{unit} \rightarrow 0$ as $y\rightarrow 0$ so the tangents to $V$ along these arcs approach all the planes containing the $y$-axis with positive slope.

Consider the regions 
$$D_1, D_2, D_3, D_4, D_5, D_6$$ 
bounded by the arcs 
$$
x=0, y^\frac{7}{2} - y^\frac{17}{4}, y^\frac{7}{2}+y^\frac{17}{4}, y^3, y^\frac{5}{2} - y^\frac{15}{4}, y^\frac{5}{2} +y^\frac{15}{4}, 2y^\frac{5}{2}.
$$
Note $f$ is lipschitz on $D_1, D_3, D_4, D_6$  and $f_y$ has constant sign on $D_2, D_5$.  Now the  $D_1, D_3$ are wider ($\Delta x=y^{7/2}\cdot $unit and $\Delta x=y^{3}\cdot $unit) than the $z$ change over
$D_2$ ($\Delta z=y^{17/4}\cdot $unit) and $D_4, D_6$ are wider ($y^{5/2}\cdot $unit and $y^{5/2}\cdot $unit) than the $z$ change over $D_5$ ($y^{15/4}\cdot $unit).  The same holds on the corresponding regions with $x<0$. Hence Theorem 4.2 shows that $V$ is semialgebraic bilipschitz homeomorphic to the plane. 

\section{Length regularity and the main theorems}

A surface $V$ is said to be {\it normally embedded} (see \cite{BM}) if 
its outer and inner metrics are equivalent, i.e. there is a constant $K>0$
such that $d_i(x,y)\le K d_o(x,y)$ for all $x,y\in V$. 
In such a case, we will say that $V$ is {\it length regular} or {\it $\ell$-regular}.  (In \cite{FW}, $\ell$-regularity is referred to as 1-regularity, following \cite{T} (p. 79) where a hierarchy of regularity is defined.) Since outer bilipschitz equivalence 
implies inner bilipschitz equivalence, it is not hard to see that $\ell$-regularity is invariant under outer bilipschitz equivalence.  
In this section we assume that $V\subset \R^3$ is a semialgebraic surface that is a $C^1$-manifold everywhere except $\0\in V$.  Assume further that, for $U$ a semialgebraic neighborhood of $\0$, $V = \Gamma f$ is a graph of semialgebraic function $f: U \rightarrow \R$, continuous at $\0$, $f(0)=0$, and that the tangent cone $C\equiv CV$ is the $xy$-plane.
 
We know that there exist at most finitely many exceptional rays.
If there are no exceptional rays, then, shrinking $U$ if necessary to remove points where 
the tangent to $V$ is vertical, $f$ is lipschitz and $V=\Gamma f$ is bilipschitz to $U$ by
$H(x,y, z)= (x, y, z-f(x,y))$.

In this section we will only discuss local results at $\0$, so will shrink $V$ and $U$ and 
domains of our bilipschitz maps to
smaller neighborhoods of $\0$ as necessary.  The assumption of $\ell$-regularity will
mean that it holds on a small enough neighborhood.  Nevertheless, the proofs  of our 
theorems will show that the results hold globally in certain cases, such as in the 
examples of the last section.

We look at what happens around a single exceptional ray which we take, without loss of generality, to be the positive $y$-axis.
There are rays $x=\pm my$ and an $\epsilon>0$ such that the positive $y$-axis is the only exceptional ray in the wedge $W\subset C$: $0\le y\le \epsilon$, $|x|\le my$.

Consider two analytic arcs $\{ x=r_1(y) \}\ \hbox{and}\ \{ x=r_2(y)\},\ 0\le y\le \epsilon$ in $C$ with
 $ -my<r_1(y)<r_2(y)<my$ for $y>0$.  Let $D=D(r_1,r_2,\epsilon)\subset C$ be the sector bounded by
 these arcs:  $0\le y\le \epsilon, \quad r_1(y)\le x\le r_2(y)$.  Say that a region $\Gamma f|_D$ over $D$ is a
{\it piece} $P$ of $V$ if $P$ is semialgebraic.
There is a positive rational number $w$ called the {\it 
width} of $P$ given by $|r_2(y)-r_1(y)|=y^{1/w}\cdot \hbox{unit}$. 

Both $\max(P)=\{(x,y,z)\in P: z\ge v \text{ for all }(u,y,v)\in P\}$ and the similarly defined $\min(P)$ are semialgebraic,
and there are arcs $(x_{max}(y),y)$ and $(x_{min}(y),y)$, 
such that $(x_{max}(y),y,f(x_{max}(y),y))\in \max(P)$ and $(x_{min}(y),y,f(x_{min}(y),y))\in \min(P)$  for all 
$0\le y \le \epsilon$ (shrinking $\epsilon$ if necessary).  
There is a positive rational number $h$ called the {\it 
height} of $P$ given by $|f(x_{max}(y),y)-f(x_{min}(y),y)|=y^{1/h}\cdot \hbox{unit}$. 

We say that $P$ is {\it flat} ($FL$ for short) if there exists a $K>0$ such that
$|f_x|\le K$ on $D$ (so $f$ is lipschitz on $D$). 

Call $P$ {\it fast increasing} (respectively, {\it fast decreasing}), $FI$ (resp., $FD$) for short, if there exists a constant $K>0$ such that 
$f_x\ge K$ (resp. $f_x \le -K$) on $D$ and there exists $r_3(y)$ such that 
$r_1(y)\le r_3(y)\le r_2(y)$ and $f_x(r_3(y),y)=\infty$
for all $0<y<\epsilon$ or $f_x(r_3(y),y)\rightarrow\infty$ as $y\rightarrow 0$.
Note that if no such $r_3$ exists, then $P$ is flat. 

 Call $P$ {\it never fast increasing} or $NFI$ (resp. $NFD$ ) if $P$ is a union of neighboring pieces (i.e. with common intersection an $arc$) which are either $FL$
or $FD$ (resp. $FL$ or $FI$). 

Let $\pi$ be the orthogonal projection to the $xy$-plane
The piece $\Gamma f|_W$ over the wedge $W$ can be partitioned into consecutive pieces $P_1$, $P_2, 
\ldots , P_{2n+1}$ (consecutive here means that for $0\le y \le \epsilon$ the
$\pi(P_i)$ are bounded by arcs $x=r_i(y)$ and $x=r_{i+1}(y)$, with $r_i(y)$
strictly increasing as $i$ increases, $r_1(y)=-my$, $r_{2n+1}(y)=my$, 
all odd labeled $P_k$ are $FL$ and the even labeled $P_k$ are alternately 
$NFI$ and $NFD$.  $W$ is called {\it well-separated} if there is some 
such partition so that each flat piece has width greater than, or equal to, the heights of its adjacent pieces.


Suppose the exceptional ray (the positive $y$-axis) is not full. Each plane in the Nash 
fiber is determined by its slope in the $x$-direction.  If the Nash fiber lacks the plane with slope infinity, then $f$ is Lipschitz on $W$.  If the Nash fiber includes
the plane with slope infinity but misses a plane of some positive slope, then $W$ has no piece which is FI, so the wedge has FL pieces on the outside of width 1, and an NFI middle piece, which has height less than 1 (by tangency of $V$ to the plane).  So $W$ is well-separated.  The same conclusion holds if the Nash fiber misses some plane of negative slope.  So every non-full exceptional ray lies in a well-separated wedge.

\begin{theorem}{\bf Necessary Condition.}\label{thm: main-nec}

Suppose there exists a piece $P=P_1\cup P_2\cup P_3 \subset \Gamma f|_W$ with $P_1, P_2, P_3$ consecutive such that $P_2$ is $FL$, one of $P_1$ and $P_3$ is $NFI$ and the other is $NFD$, and $w(P)< \min(h(P_1),h(P_3))$.
Then $P$ is not $\ell$-regular, so $V$ is not bilipschitz to the plane.
\end{theorem}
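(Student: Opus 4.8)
The strategy is to produce, for every small $t>0$, points $p_1(t)\in P_1$ and $p_3(t)\in P_3$ over the slice $\{y=t\}$ whose outer distance is at most $t^{1/w(P)}\cdot\hbox{unit}$ but whose inner distance in $V$ is at least a fixed positive multiple of $t^{1/m}$, where $m:=\min(h(P_1),h(P_3))$. Since $w(P)<m$ forces $1/w(P)>1/m$, the ratio $d_i/d_o$ then blows up as $t\to0^+$, so $V$ — and hence the piece $P\subset V$, in which $d_i$ can only be larger — is not normally embedded; as $\ell$-regularity is an outer bilipschitz invariant and $\R^2$ is $\ell$-regular, $V$ is not bilipschitz to the plane.

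Replacing $f$ by $-f$ if necessary (this swaps $NFI$ with $NFD$ and preserves both metrics) we may assume $P_1$ is $NFD$, $P_2$ is $FL$, $P_3$ is $NFI$, and also $h(P_1)\le h(P_3)$, so $m=h(P_1)$. Write $r_1<r_2<r_3<r_4$ for the four bounding arcs, so $\pi(P_2)=\{r_2(y)\le x\le r_3(y)\}$. From the definitions, over each slice $\{y=\hbox{const}\}$ the function $f$ climbs across $\pi(P_1)$ — up to $o(t^{1/m})$ wiggles in the flat subpieces, since $P_1$ has total width $t^{1/w(P_1)}\cdot\hbox{unit}=o(t^{1/m})$ — from $f(r_1(y),y)$ up to the value $v(y):=f(r_2(y),y)$, stays within $o(t^{1/m})$ of $v(y)$ across the plateau $\pi(P_2)$ (itself flat of width $o(t^{1/m})$), and descends again across $\pi(P_3)$; so $v(y)$ is the plateau height. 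One preliminary remark: because $CV$ is the $xy$-plane, $f(x,y)=o(|(x,y)|)$, hence $v(y)=o(y)$, hence (being semialgebraic) $v(y)=y^{p}\cdot\hbox{unit}$ with $p>1$, or $v\equiv 0$; in either case $|v(y')-v(y)|=o(t^{1/m})$ whenever $y,y'\approx t$ with $|y'-y|=o(t^{1/m})$.

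Fix a suitable small $\delta_0>0$ and let $p_i(t)$ be a point of $P_i$ over $\{y=t\}$ with $z$-coordinate $c(t):=v(t)-\delta_0\,t^{1/m}$; such points exist for small $t$ because the height of $P_i$ over that slice is $t^{1/h(P_i)}\cdot\hbox{unit}$, of order at least $t^{1/m}$. Since $p_1(t)$ and $p_3(t)$ have equal $y$- and $z$-coordinates, with $x$-coordinates in $[r_1(t),r_4(t)]$, we get $d_o(p_1(t),p_3(t))\le t^{1/w(P)}\cdot\hbox{unit}$. For the inner bound, suppose $\gamma_t$ joins $p_1(t)$ to $p_3(t)$ with $\ell(\gamma_t)=o(t^{1/m})$. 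Every $q\in\gamma_t$ then lies within $o(t^{1/m})$ of $p_1(t)$ in each coordinate; in particular $y(q)=t+o(t^{1/m})$ and, since $V=\Gamma f$, $f(\pi(q))=z(q)=c(t)+o(t^{1/m})<v(t)-\tfrac{\delta_0}{2}t^{1/m}$ for $t$ small (which also keeps $\gamma_t$ off the origin). Hence the connected set $\pi(\gamma_t)$ lies in $\Omega:=\{(x,y):|y-t|=o(t^{1/m}),\ f(x,y)<v(t)-\tfrac{\delta_0}{2}t^{1/m}\}$. But for these $y$ the plateau satisfies $f\ge v(y)-o(t^{1/m})=v(t)+o(t^{1/m})>v(t)-\tfrac{\delta_0}{2}t^{1/m}$, so $\Omega$ contains no point of $\pi(P_2)$; since $\pi(P_2)$ separates $\pi(P_1)$ from $\pi(P_3)$, the points $\pi(p_1(t))$ and $\pi(p_3(t))$ lie in different connected components of $\Omega$, contradicting the existence of $\pi(\gamma_t)$. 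Therefore $d_i(p_1(t),p_3(t))$ is not $o(t^{1/m})$, and being semialgebraic in $t$ it is bounded below by a fixed positive multiple of $t^{1/m}$, as required.

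The step I expect to cost real work is the separation assertion at the end. If one wants only the literal statement ``$P$ is not $\ell$-regular'' — paths $\gamma_t$ confined to $P$ — it is immediate, since $\pi(P_2)$ visibly separates $\pi(P_1)$ from $\pi(P_3)$ inside the sector $\pi(P)$. To rule out a short detour through $V\setminus P$ (and so conclude that $V$ itself is not $\ell$-regular, which is what obstructs a bilipschitz map $V\to\R^2$) one must verify that the thin, possibly curved plateau strip $\bigcup_y\big([r_2(y),r_3(y)]\times\{y\}\big)$ still separates $\pi(p_1(t))$ from $\pi(p_3(t))$ inside the $o(t^{1/m})$-size box containing $\pi(\gamma_t)$ — delicate precisely because the strip's width $t^{1/w(P_2)}\cdot\hbox{unit}$, its drift over that box's $y$-range, and the box's own dimensions are all of the same order $o(t^{1/m})$.
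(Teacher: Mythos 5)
Your proof is correct and follows essentially the same route as the paper's: both arguments select two points of equal $z$-coordinate on opposite sides of the flat piece $P_2$ (so their outer distance is bounded by the width of $P$) and show that any connecting path in $V$ must pass over an arc where $f$ exceeds that common height by $t^{1/m}\cdot\hbox{unit}$, which is exactly the mechanism of Example 3.1, Case III that the paper invokes (the paper crosses the ridge arc $x=x_{max}(y)$ and fixes the level via the tallest $FI$ subpiece of $P_1$, but these choices are cosmetic). The separation step you flag as delicate is in fact immediate: $r_2$ is a continuous graph over $y$ defined on the entire (small, positive) $y$-range of $\pi(\gamma_t)$, so the intermediate value theorem applied to $x-r_2(y)$ along $\gamma_t$ forces a crossing of the left edge of $\pi(P_2)$, at which point $z=f(r_2(Y),Y)=v(t)+o(t^{1/m})$ contradicts your bound on $z(q)$.
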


\begin{proof}
Let $P=P_1\cup P_2\cup P_3$ be as above, and assume without loss
of generality that   $P_1$ is $NFD$ and $P_3$ is $NFI$.
Then $P_1$ is a union of pieces which are $FI$ and $FL$.
Since $h(P_1)>w(P)\ge w(P_1)$ and %
the height of every $FL$ piece 
is less or equal to its width, there is an $FI$ piece $P^*$ in $P_1$
with $h_1=h(P^*)=h(P_1)$. 

Consider the arcs $(x^*_{min}(y),y)$ and $(x^*_{max}(y),y)$ associated 
to $P^*$. Since all the slopes of $P^*$ in the $x$-direction are positive,
$x^*_{min}(y)<x^*_{max}(y)$ and $h(P^*)=h(P_1)=h_1$ satisfies 
$|f(x^*_{min}(y),y)-f(x^*_{max}(y),y)|=y^{1/h_1}\cdot \hbox{unit}$.

Similarly $P_3$ contains an $FD$ piece $P^!$ for which 
$x^!_{min}(y)>x^!_{max}(y)$ and $h(P^!)=h(P_3)=h_3$ satisfies 
$|f(x^!_{min}(y),y)-f(x^!_{max}(y),y)|=y^{1/h_3}\cdot \hbox{unit}$.
We may assume without loss of generality  that $f(x^*_{min}(y),y)\ge f(x^!_{min}(y),y)$.
Let $x_1(y)=x^*_{min}(y)$ and
let $x_2(y)=x_{max}(y)$ for $P$. 
Then $x_1(y)<x_2(y)$, and there exists 
$x_3(y)$ with $(x_3(y),y)\in P_3$ and 
$f(x_3(y),y)=f(x_1(y),y)$.  Necessarily $x_2(y)<x_3(y)$.
Now, 
$x_3(y)-x_1(y)$ goes to zero faster than $f(x_2(y),y)-f(x_1(y),y)$,
so the same argument as in Example \ref{ex:one}, Case III, shows that $P$ and hence
$V$ is not $\ell$-regular.

The second part follows from the invariance of $\ell$-regularity under
bilipschitz equivalence.
\end{proof}

\begin{theorem}{\bf Sufficient Condition.}\label{thm:main-suff}

a)  If $\Gamma f|_W$ is well-separated, then it is semialgebraically bilipschitz equivalent to its linearization, which is the graph of a 
lipschitz function over the plane. In particular, this holds if the exceptional ray is 
non-full.

b) If all the exceptional rays lie in well-separated pieces, then $V$ is 
semialgebraically bilipschitz equivalent to $C$.
\end{theorem}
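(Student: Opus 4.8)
The plan is to establish part (a) directly and then deduce part (b) from it by patching, so let me dispose of the reduction first. Granting (a), take the finitely many exceptional rays $\ell_1,\dots,\ell_r$ and pairwise disjoint wedges $W_1,\dots,W_r$ around them as in the set-up of this section; each $W_i$ is well-separated by hypothesis. On the complement (inside a small ball) of $\bigcup_i W_i$ there are no exceptional rays, so after shrinking to delete the points with vertical tangent $f$ is lipschitz there and the shear $H(x,y,z)=(x,y,z-f(x,y))$ carries that part of $V$ bilipschitzly onto a region of the $xy$-plane $C$. If the homeomorphism produced by (a) on each $W_i$ is arranged to agree with $H$ near $\partial W_i$ — which can be done because the construction below only modifies $f$ on the inner parts of the flat pieces, leaving a collar near $\partial W_i$ on which $f$ is already lipschitz — then these finitely many semialgebraic bilipschitz maps agree on overlaps, and patching them gives a semialgebraic bilipschitz homeomorphism $V\to C$.

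For (a), fix a well-separated wedge $W$ with partition $P_1,\dots,P_{2n+1}$ (odd pieces $FL$, even pieces alternately $NFI$ and $NFD$). On the flat pieces $f$ is lipschitz, so the entire difficulty is concentrated in the even pieces $P_{2j}$, where $f_x$ may tend to $\infty$ and $\Gamma f|_{P_{2j}}$ is a nearly vertical sheet that cannot be straightened bilipschitzly within its own $\pi$-image — extra horizontal room is needed, and well-separatedness is exactly the input supplying it: $h(P_{2j})$ is, as a function of $y$, no larger than the width of either adjacent flat piece. I would define the \emph{linearization} $\tilde f$ as follows: keep $\tilde f=f$ on the flat pieces away from their common arcs with even pieces, and on each set $Y_j=\pi(P_{2j})\cup S_j$, where $S_j$ is a strip of a neighbouring flat piece of width $\asymp h(P_{2j})$ cut along the common arc, replace $f$ by the function which for each fixed $y$ is linear in $x$ and interpolates the two boundary values of $f$ on $Y_j$. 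By well-separatedness $S_j$ fits inside the flat neighbour, and the resulting ramp rises by $\asymp h(P_{2j})$ over an $x$-interval of length $\asymp h(P_{2j})$, hence has bounded slope; so $\tilde f$ is lipschitz and $\Gamma\tilde f|_W$ is bilipschitz to the plane via the shear. (When the exceptional ray is non-full, $W$ is automatically well-separated, as observed just before the theorem.)

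It then remains to produce a semialgebraic bilipschitz homeomorphism $\Gamma f|_W\to\Gamma\tilde f|_W$. Off the $Y_j$ it is the identity. Over each $Y_j$ I would define it slicewise in $y$: for fixed $y$ the source arc $x\mapsto(x,y,f(x,y))$ over $Y_j$ is sent to the target arc $x\mapsto(x,y,\tilde f(x,y))$ by matching a normalized arclength parameter — which up to bounded distortion is the parameter equal to $x$ on $S_j$ and tracking the running value of $f$ on $\pi(P_{2j})$ — extended so as to preserve $y$ and to fix the endpoint arcs of $Y_j$, so that it patches with the identity on the flat pieces. This slicewise map is uniformly bilipschitz because each $\Gamma f|_{P_i}$ (and each $\Gamma\tilde f|_{P_i}$) is uniformly chord-arc in every $y$-slice: flat pieces have bounded slope, an $NFI$ (resp.\ $NFD$) piece is monotone up to bounded-slope excursions, and semialgebraicity (tameness) bounds the number of constituent sub-pieces and hence the total variation of $f$ by a multiple of the height; and an arclength-matching map between uniformly chord-arc arcs of comparable length is uniformly outer-bilipschitz. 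Composing with the shear $H$ then lands in the $xy$-plane $C$, which together with the reduction above yields the final statement.

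The step I expect to be the main obstacle is verifying that this is genuinely bilipschitz with one constant, not merely slicewise. Two things must be controlled uniformly as $y\to 0$: that the constants above (depending on the lipschitz constants of the flat pieces, the aperture $m$, the number of pieces, the tame extremum bound, and the ``unit'' factors in the widths and heights) stay bounded; and, more delicately, that the slicewise reparametrizations vary lipschitzly in $y$, so that distances in the $y$-direction and in mixed directions on $\Gamma f|_W$ are also matched. The latter is where the standing hypothesis $CV=xy$-plane enters in earnest: the tangency $f=o(|(x,y)|)$ forces the relevant partials $\partial_y f$, and the $y$-derivatives of the defining arcs $r_i(y)$, to stay bounded on $W$, which is what makes the $y$-dependence of the matching maps lipschitz. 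The remaining bookkeeping — that all choices are semialgebraic, and that patching finitely many semialgebraic bilipschitz maps agreeing on overlaps is again bilipschitz — is routine.
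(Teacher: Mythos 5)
Your reduction of (b) to (a) and your linearization step match the paper: well-separatedness is used exactly as you use it, to borrow an adjacent flat strip of width at least the height of the steep piece so that the $x$-linear interpolation has bounded slope (the paper's Case~IV splits each flat neighbor down the middle and relinearizes over the enlarged region $DD$; your strip $S_j$ of width $\asymp h(P_{2j})$ is the same idea). Where you diverge is in how the graph is carried onto its linearization, and that is where your argument has a genuine gap. The paper never uses a slicewise arclength matching: it builds an explicit ambient push $H(x,y,z)=(x,y,z+u(x,y,z)(f-L))$ supported between two auxiliary graphs $f^B,f^T$, and proves bilipschitzness by bounding $h_x,h_y$ and pinching $h_z$ between $c/(c+1)$ and $(c+1)/c$; on the $FI$/$FD$ pieces, where $f_x\to\infty$, it first projects the graph onto a slanted plane $\ell\times(y\hbox{-axis})$ (with $\ell$ of positive slope in the $xz$-plane), over which the surface \emph{is} a lipschitz graph because the slopes lie in $[0,\infty]$, and then runs the same estimate in the slanted coordinates. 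That slant projection is precisely the device that supplies the uniform control in the $y$-direction that your construction lacks.

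Concretely, the step you flag as ``the main obstacle'' is not resolved by your stated justification. You claim that $CV$ being the $xy$-plane, i.e.\ $f=o(|(x,y)|)$, forces $\partial_y f$ (and hence the $y$-dependence of your matching maps) to be bounded on $W$. This is false: the tangent cone constrains the \emph{values} of $f$, not its derivatives, and on the $FI$/$FD$ pieces the partials blow up by definition ($f_x\to\infty$ there, and $f_y$ need only be $o(|f_x|)$ by Whitney's condition that limit planes along the exceptional ray contain that ray, so $f_y$ may also be unbounded). Without a uniform bound on how the slicewise reparametrizations vary in $y$, the arclength-matching map is only controlled within each slice, and distances between points in different $y$-slices (which on a nearly vertical sheet can be dominated by the $z$-offset produced by two slightly different reparametrizations) are not shown to be preserved up to a constant. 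To close the gap you would need either the paper's slant-projection trick --- after which the surface over a steep piece is a lipschitz graph in rotated coordinates and the Case~I derivative estimates apply verbatim --- or an independent proof that your matching parameter is lipschitz in $y$ with uniform constant, which is essentially equivalent work.
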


%
 \begin{conjecture} \label{thm:main-conj}

Let $V\subset \R^3$ be semialgebraic surface that is a $C^1$-manifold everywhere except $\0\in V$.  Assume further that $V = \Gamma f$ is a graph of a semialgebraic function $f: U \rightarrow \R$, continuous at $\0$, $U$ a semialgebraic neighborhood of $\0$, and that the tangent cone $C\equiv CV$ is the $xy$-plane.  Then $V$ is bilipschitz to $C$ if and only if $V$ is 
$\ell$-regular.    
\end{conjecture}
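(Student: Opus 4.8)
The plan is to combine the Necessary and Sufficient Conditions of this section, Theorems~\ref{thm: main-nec} and~\ref{thm:main-suff}, by proving that, for a graph surface whose tangent cone is the $xy$-plane, $\ell$-regularity is equivalent to the existence of a well-separated wedge around every exceptional ray. One direction is immediate: $C$ is the $xy$-plane, which is normally embedded and hence $\ell$-regular, and $\ell$-regularity is preserved under outer bilipschitz equivalence, as recalled at the start of the section; so if $V$ is bilipschitz to $C$ then $V$ is $\ell$-regular. The substance is the converse.

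So assume $V=\Gamma f$ is $\ell$-regular. There are only finitely many exceptional rays, and away from them $f$ is Lipschitz and is handled by the map $H(x,y,z)=(x,y,z-f(x,y))$; thus by Theorem~\ref{thm:main-suff}(b) it is enough to show that each exceptional ray lies in a well-separated wedge. Fix one, taken to be the positive $y$-axis with wedge $W$, and consider partitions $\Gamma f|_W=P_1\cup\cdots\cup P_{2n+1}$ of the kind described above, with odd pieces $FL$ and even pieces alternately $NFI$ and $NFD$; such partitions exist because the finitely many semialgebraic arcs along which $|f_x|$ blows up cut $W$ into regions of controlled one-sided monotonicity, with flat strips inserted between consecutive folds. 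The heart of the matter is to show that $\ell$-regularity forces one such partition to be well-separated.

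I would argue the contrapositive. If no partition of this kind is well-separated, then in a suitably chosen one some flat piece $P_{2k+1}$ is narrower, as a rational exponent, than the height of an adjacent piece; and since the even pieces alternate in type, its two neighbors $P_{2k}$ and $P_{2k+2}$ are one $NFI$ and one $NFD$. From the $NFD$ neighbor one wants to extract an $FI$ subpiece lying close to $P_{2k+1}$ and realizing the relevant height, from the $NFI$ neighbor an $FD$ subpiece likewise, and to assemble these with the flat piece separating them into a configuration $P'=P'_1\cup P'_2\cup P'_3$ with $P'_2$ flat, $\{P'_1,P'_3\}=\{NFI,NFD\}$, and $w(P')<\min(h(P'_1),h(P'_3))$ --- exactly the hypotheses of Theorem~\ref{thm: main-nec}. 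That theorem then gives that $V$ is not $\ell$-regular, the desired contradiction; and Theorem~\ref{thm:main-suff}(b), applied to the well-separated wedges so obtained, finishes the proof.

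The main obstacle is making that extraction rigorous, and in particular securing the inequality $w(P')<\min(h(P'_1),h(P'_3))$. Theorem~\ref{thm: main-nec} as it stands wants the folds flanking the flat region to be a single $FI$ piece and a single $FD$ piece adjacent to it, whereas an $NFI$ or $NFD$ neighbor is in general a mixture of flat and one-signed fold pieces and may itself be wide, so that passing to the whole neighbor destroys the width bound. Carrying the argument through will likely require a strengthened necessary condition allowing the sides of the valley to be $NFD$ and $NFI$ pieces --- with the distance estimate of Example~\ref{ex:one}, Case~III localized to a subarc of width comparable to that of $P_{2k+1}$ --- together with bookkeeping showing that widths and heights of $NFI$/$NFD$ pieces behave controllably under the required coarsenings, so that the alternating structure survives while the governing exponents stay in hand. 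A partition-free alternative would be to invoke the inner Lipschitz classification of semialgebraic surface germs: a normally embedded (hence $\ell$-regular) surface germ homeomorphic to a disk ought to be bilipschitz to a disk, and hence to $C$; but this approach too would require pinning down that classification precisely in the normally embedded case.
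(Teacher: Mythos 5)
This statement is not proved in the paper: it is stated as a \emph{Conjecture} and left open (the proof environment that follows it in the source is the proof of Theorem~\ref{thm:main-suff}, not of the conjecture). Your easy direction is correct and is exactly what the authors use elsewhere: the plane is normally embedded, and $\ell$-regularity is an outer bilipschitz invariant, so bilipschitz equivalence to $C$ forces $\ell$-regularity. The substantive direction, however, is precisely what the authors could not close, and your main plan reproduces their gap rather than filling it. Theorems~\ref{thm: main-nec} and~\ref{thm:main-suff} are not complementary: failure of well-separatedness only produces a flat piece narrower than the height of \emph{one} adjacent piece, whereas Theorem~\ref{thm: main-nec} needs $w(P)<\min(h(P_1),h(P_3))$ with the two flanks of opposite type ($NFI$ on one side, $NFD$ on the other). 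A wedge in which every violating flat piece is flanked on both sides by $NFI$ pieces, or is narrower than one neighbor's height but not the other's, falls into neither theorem. You acknowledge this (``the main obstacle is making that extraction rigorous''), but the extraction is not a bookkeeping issue: it is the open content of the conjecture, and as written your argument does not establish it.

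Your ``partition-free alternative'' deserves more weight than you give it, because the paper itself supplies the missing classification: Theorem~\ref{thm: inner} asserts unconditionally (under the standing hypotheses) that $V$ is semialgebraically \emph{inner} bilipschitz equivalent to $C$. If $V$ is $\ell$-regular, the identity map $(V,d_o)\to(V,d_i)$ is bilipschitz by definition of normal embedding, and on the plane $C$ the inner and outer metrics coincide; composing gives an outer bilipschitz homeomorphism $(V,d_o)\to(C,d_o)$. If ``bilipschitz'' in the conjecture is read with the paper's default (outer, not ambient), this composition settles the hard direction modulo the proof of Theorem~\ref{thm: inner}. The caveats are that this produces no ambient or explicitly constructed equivalence, and that the authors' decision to leave the statement as a conjecture suggests either that they intend a stronger (e.g.\ ambient semialgebraic) notion of equivalence or that they regard the H\"older-complex argument for Theorem~\ref{thm: inner} as a sketch; you would need to resolve that point before claiming the conjecture.
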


\begin{proof} (of Theorem \ref{thm:main-suff})
Let $\alpha(y)\subset \R^2$ and $\beta(y)\subset \R^2$ be arcs on $y\ge 0$ with convergent Puiseux expansions at $0$ of the form
$$
\alpha(y) = (y^au(y), y), \ \beta(y) = (y^bv(y), y); \quad a,b \ge 1; \quad u(y), v(y)\  \hbox{units}
$$
where $u(y)$ being a unit means that it has the form $u_0 + u_1y^{r_1} + \ldots$ with $u_0\ne 0$ and $r_i>0$ rational (and similarly $v(y)$).  We allow $a,b = 1$, and assume that $\pi_1\alpha(y) \le \pi_1\beta(y), \hbox{for all}\  y \ge 0$ (where $\pi_1$ is projection onto the first coordinate). 

\begin{figure}[h] 
  \centering
 \includegraphics[width=3in,height=2.5in,keepaspectratio]{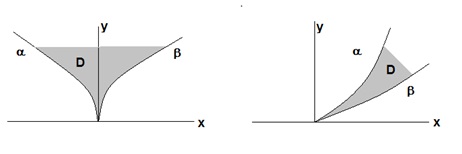}
  \caption{The region $D$}
  \label{fig:alphabeta}
\end{figure}

Let $D$ be the plane region bounded by $\alpha(y)$ and $\beta(y)$. See Figure \ref{fig:alphabeta}.  Although $V$ is $C^1$ off $0$, the function $f$ is not necessarily $C^1$ (the tangent to $V$ could contain vertical vectors).  

We will show how to construct semialgebraic bilipschitz transformations (with compact support) of the graph of $f|_D$ to it's linearization in several situations.
\bigskip

{\bf Case I}.  Assume that $f$ is lipschitz on $D$.  

Let $L$ be the linearization of the $f$ in the $x$-direction:
$$
L(x,y) = f(\alpha(y)) + \frac{f(\beta(y))-f(\alpha(y))}{\pi_1(\beta(y)) - \pi_1(\alpha(y))}\cdot(x - \pi_1(\alpha(y)).
$$
We will construct a semialgebraic bilipschitz map $H(x,y, z)$ such that 
$$
H(x,y, L((x,y)) = (x,y, f(x,y)).
$$
Fix any $c>0$.  Let 
\begin{eqnarray*}
f^T &=& \max(f, L) + c\cdot(|f - L|)\\
f^B &=& \min(f, L) - c\cdot(|f - L|)
\end{eqnarray*}
If $f > L$, then 
\begin{eqnarray*}
f^T &=& f + c(f-L) = (1+c)L - cL\quad \hbox{and}\\
f^B &=& L - c(f-L) = (1+c)L - cf
\end{eqnarray*}
If $f<L$, then
\begin{eqnarray*}
f^T &=& L + c(L-f) = (1+c)L - cf\quad \hbox{and}\\
f^B &=& (1+c)f -cL
\end{eqnarray*}
If $f=L$, then 
$$
f^T = f^B = f = L.
$$
So, for example, for a fixed $y$ positive and $c = \frac{1}{2}$ and $f$ as shown, $f^T$ and $f^B$ are as sketched in Figure \ref{fig:fthing}.
\begin{figure}[h] 
  \centering
  \includegraphics[width=3in,height=2.5in,keepaspectratio]{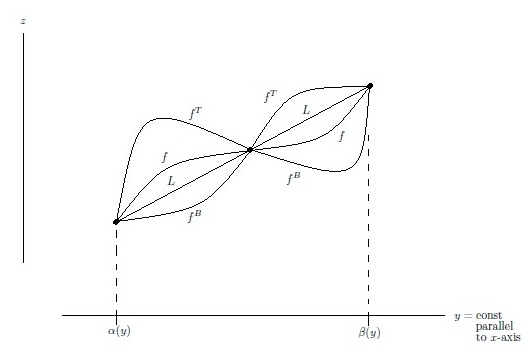}
  \caption{$f^T$ and $f^B$}
  \label{fig:fthing}
\end{figure}
Let  $H(x, y, z) = (x, y, h(x,y, z))$ where $h(x,y,  z)$ maps [$f^B, L$]  linearly onto [$f^B, f$] and [$L, f^T$] linearly onto [$f, f^T$] (for each fixed $x, y$) and $h(x,y,z) = z$ if $z<f^B$ or $z>f^T$.  (See Figure \ref{fig:hgraph}.) $H$ is assumed the identity outside the region bounded between $f^B$ and $f^T$.
\begin{figure}[h] 
  \centering
  \includegraphics[width=3in,height=2.5in,keepaspectratio]{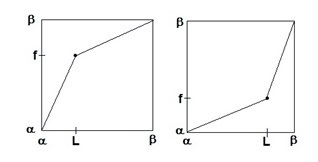}
  \caption{Graph of $h$}
  \label{fig:hgraph}
\end{figure}

It is easy to compute that
\[
h_z = 
 \begin{array}{l|cc}
     &\small f>L&f<L\normalsize \\
     \hline
     & & \\
\small z<L \normalsize &\frac{c+1}{c}&\frac{c}{c+1}\\
&&\\
\small z>L\normalsize&\frac{c}{c+1}&\frac{c+1}{c}
\end{array}
\quad graph\ of\ h\  
\]

To compute $h_x$ and $h_y$ it is convenient to express $h$ in terms of the following ``bump function":
\[
h(x,y,z) = 
\begin{cases}
\frac{z-f^B}{L-f^B} \quad\hbox{on}\qquad\quad f^B \le z \le L\\
\frac{f^T-z}{f^T-L   }\quad\hbox{on}\qquad\quad L \le z\le f^T\\
\ \ \ 0\ \ \ \quad\hbox{outside}\quad f^B \le z \le f^T.
\end{cases}
\]
(See Figure \ref{fig:bumpu}.) Then $h(x,y,z) = z + u(x,y,z)(f(x,y) - L(x,y))$.

\begin{figure}[h] 
  \centering
  \includegraphics[width=3.25in,height=2.75in,keepaspectratio]{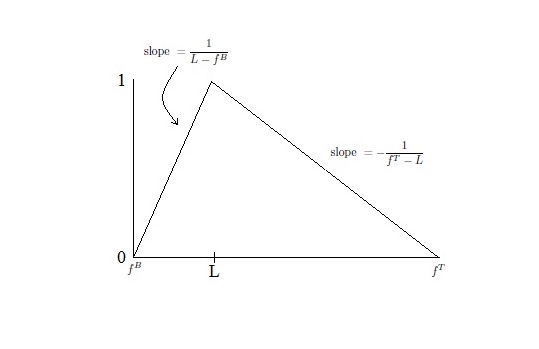}
  \caption{The bump function $u$}
  \label{fig:bumpu}
\end{figure}

By assumption $f$ is lipschitz on $D$; that is, there exists $k > 0$ such that 
$$
|f(x,y) - f(\overline{x}, \overline{y})| \le k\|(x,y)-(\overline{x}, \overline{y})\|\quad\hbox{for all}\   (x,y), (\overline{x}, \overline{y})\ \hbox{in} \  D.
$$
Letting
$$
w(x,y) = \frac{x-\alpha(y)}{\beta(y)-\alpha(y)} \quad 0\le w \le 1,
$$
we can rewrite $L(x,y)$ as 
$$
(1-w)f(\alpha(y)) + wf(\beta(y)).
$$
So
\begin{eqnarray*}
|L(x,y) - L(\overline{x}, \overline{y})| &\le& |f(\alpha(y)) - f(\alpha(\overline{y})|+|f(\beta(y))-f(\beta(\overline{y}))|\\
&\le& k(\|\alpha(y) - \alpha(\overline{y})\| + \|\beta(y)-\beta(\overline{y})\|)\\
&\le&k'\|y - \overline{y}\|
\end{eqnarray*}
where the last inequality follows because $\alpha$ and $\beta$ are lipschitz.  
It follows that $L$ is lipschitz on $D$.

Note that lipschitz implies that the partials are bounded where defined.  We have
$$
h_x = u\cdot(f_x-L_x) + u_x(f-L)
$$
where $0\le u \le 1$ and $(f_x-L_x)$ is bounded.   On $f^B\le z \le L$, the second term
\begin{eqnarray*}
u_x(f-L) &=& \frac{(L-f^B)(-f^B_x) - (z-f^B)(L_x - f^B_x)}{(1-f^B)^2}\cdot(f-L)\\
&=& -f_x^B\cdot\frac{(f-L)}{(L-f^B)} - \frac{(z-f^B)}{(L-f^B)} \cdot(L_x -f^B_x)\cdot\frac{(f-L)}{(L-f^B)}
\end{eqnarray*}
is bounded, because all terms in the second equality are bounded---note 
that $0\le \frac{z-f^B}{L-f^B} \le 1$ since $f^B \le z\le L$. 
Similarly, $f_x$ is bounded on $L\le z \le f^T$.

The calculation of $h_y$ is the same except that we need to replace $f^B_x$ and $L_x$ by $f_y^B$ and $L_y$.

Since $f$ and $L$ are Lipschitz in $x, y$ so are $f^B$ and $f^T$ and the same calculation shows that $h_y$ is bounded.  

The map $H$ is a homeomorphism, is semialgebraic, and there is a semialgebraic nowhere dense set $\Sigma$ off which $H$ is differentiable with
\[
dH = \left(\begin{array}{ccc}
1&0&0\\
0&1&0\\
h_x&h_y&h_z
\end{array}\right)
\quad\hbox{and}\quad 
\begin{cases}
h_x, h_y \ \hbox{bounded}\\
\frac{c}{c+1} \le h_z \le \frac{c+1}{c}
\end{cases}.
\]
We claim that this implies that $H$ is globally bilipschitz.

\begin{proof} To see this, pick $p, q$ in $\R^3$.  The line segment $\overline{pq}$ may intersect $\Sigma$ in infinitely many points (an interval).  Let $A$ be a small disk in the plane perpendicular to $\overline{pq}$ centered at the midpoint of $\overline{pq}$. Then $\{ \sigma: \overline{p\sigma}\cap\Sigma$ is not finite$\} \cup \{\sigma : \overline{\sigma q}\cap\Sigma$ is not finite$\}$ is nowhere dense in $A$, so fix $\sigma$ not in this set.  
Then 
$$
H(q) - H(p) = \int_{p\sigma q} dH\cdot\overline{u}
$$
 where $\overline{u}$ is the unit tangent to $\overline{p\sigma}$ or $\overline{\sigma q}$ as appropriate.  So 
 $\hbox{length}(p\sigma q)\le 2|p-q|$.  Thus 
 $$\| H(q) - H(p)\| \le \max\| dH\| \cdot 2|q-p|$$
 where $\max\| dH\|$ is a constant $K$.  So $H$ is lipschitz.  
 Now 
 \[
dH^{-1} = \left(\begin{array}{ccc}
1&0&0\\
0&1&0\\
-\frac{h_x}{h_z}&-\frac{h_y}{h_z}&\frac{1}{h_z}
\end{array}\right)
\quad\hbox{off}\quad h(\Sigma).
\]
So $H^{-1}$ is also lipschitz.  Note by the construction that $H$ and $H^{-1}$ are also semialgebraic.
\end{proof}
\bigskip

{\bf Case II}.  Suppose that $f$ is increasing with respect to $x$ on [$\alpha(y), \beta(y)$] for all $y>0$, but is not necessarily lipschitz.  Let $\ell$ be a line through $0$ of positive slope in the $xz$-plane, and $\ell^\perp$ its orthogonal complement in the $xz$-plane. Let $\pi_\ell$ be orthogonal projection to the $\ell\times (y\hbox{-axis}) $ plane (so $\ker \pi_\ell = \ell^\perp$).

Let $V_D$ be the part of the graph of $f$ over $D$.  Because the slope of $f$ in the $x$-direction lies in [$0,\infty$] over $D$, lines parallel to $\ell^\perp$ are not contained in $TV_D$ or its Nash limit at $0$.  Thus $\pi_\ell|(V_D-0)$ is nonsingular with image $\tilde{D}$ in  $\ell \times (y\hbox{-axis}) = \hbox{image of } \pi_\ell$.  $V_D$ is the graph of a lipschitz function
$$
\tilde{f}: \tilde{D} \subset \hbox{im}(\pi_\ell) \rightarrow \ell^\perp.
$$
A rotation about the $y$-axis takes $\hbox{im}(\pi_\ell)$ onto the $xy$-plane and $\ell^\perp$ onto the $z$-axis, so takes $V_D$ to the graph of a lipschitz function. 

By the argument in Case I, we have a bilipschitz map $H$ taking $\Gamma L$ to $\Gamma f$.  This $H$ moves points in the direction of $\ell^\perp$.

The support of $H$ is the region between $\Gamma f^B$ and $\Gamma f^T$ as before.  See Figure \ref{fig:slantproj_s}.

\begin{figure}[h] 
  \centering
  \includegraphics[width=4.5in,height=4in,keepaspectratio]{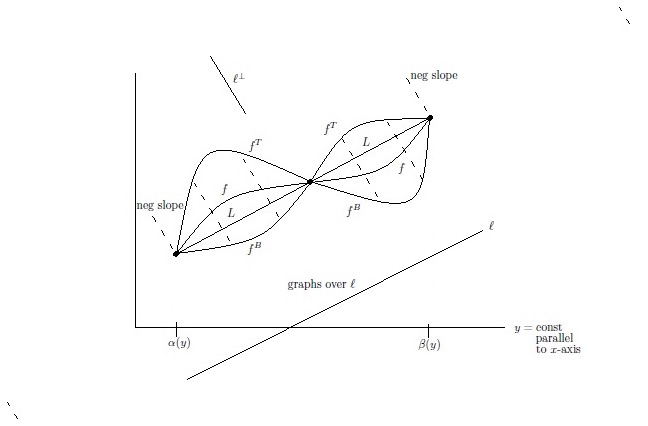}
  \caption{The map H}
  \label{fig:slantproj_s}
\end{figure}
This support can be contained between [$\alpha(y), \beta(y)$] unless $f_x=\infty$ at $\alpha (y)$
and/or $\beta(y)$ (as in Figure \ref{fig:slantproj_s}).  In this latter case, by choosing $c$ (used to define $f^T$ and $f^B$) sufficiently small, we can make the tangent to $f^T$ (or $f^B$) arbitrarily close to vertical in the case $f_x = \infty$ at $\alpha(y)$ and/or $\beta(y)$. Furthermore, choosing $c$ small we can make the projection of the support of $H$ lie in a region $\left[ \alpha(y) - k\Delta(y), \beta(y) + k\Delta(y)\right]$ where $\Delta(y) = \beta(y) -\alpha(y)$ and $k$ depends on $c$.  So the support of $H$ is contained in a region looking as in Figure \ref{fig:nbhdLproj} (where $\theta$ and $k$ can be made arbitrarily small).
\begin{figure}[h] 
  \centering
  \includegraphics[width=3.25in,height=2.5in,keepaspectratio]{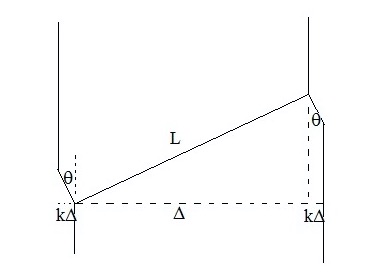}
  \caption{A region containing the support of $H$}
  \label{fig:nbhdLproj}
\end{figure}


\bigskip

{\bf Case III}. 
Now suppose a region $D$ as we have just described is surrounded on both sides by regions $D_1$ between [$\alpha_1(y), \alpha(y)\equiv \beta_1(y)$] and $D_2$ between 
[$\alpha_2(y)\equiv \beta(y), \beta_2(y)$]  on which there exist $r, r_1, r_2$ such that
\begin{eqnarray*}
\alpha(y) - \alpha_1(y) &\approx& t^{r_1},\\
\beta(y)-\alpha(y) &\approx& t^r,\\
\beta_2(y) -\beta(y) &\approx& t^{r_2},
\end{eqnarray*}
and such that $\Gamma f|D_1$ and $\Gamma f|D_2$ are FL.  See Figure \ref{fig:differentcurves}.
\begin{figure}[h] 
  \centering
  \includegraphics[width=4in,height=2.75in,keepaspectratio]{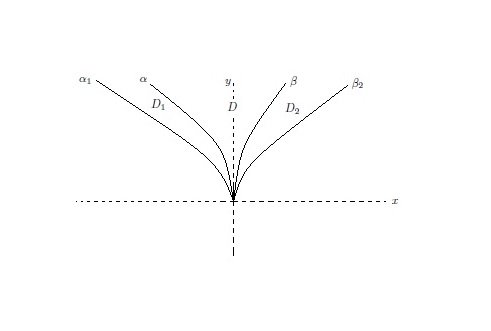}
  \caption{A region $D$ surrounded by regions $D_1$ and $D_2$}
  \label{fig:differentcurves}
\end{figure}

Assume that $r_1, r_2 \le r$ (so the width of $D$ is no greater than that $D_1$ or $D_2$).   Then there is a semialgebraic bilipschitz $H$ taking $V_{(D_1\cup D\cup D_2)} = \Gamma f|_{(D_1\cup D \cup D_2)}$ to its piecewise linearization over each $D_1, D_2, D$ with support contained over $(D_1\cup D \cup D_2)\times z\hbox{-axis}.$  But if 
$h(\Gamma f|_D)>w(\Gamma f|_{D_1\cup D \cup D_2})$,
then the linearization over $D$ approaches infinite slope as $y$ goes to 0.
\bigskip

{\bf Case IV}.  Assume everything is as in Case III, but in addition that 
$h(\Gamma f|_D)\le \min(w(\Gamma f|_{D_1}),w(\Gamma f|_{D_2}))$. Then divide
$D_1$ (resp. $D_2$) down the middle to get $D^L_1$ and $D^R_1$ (resp.
$D^L_2$ and $D^R_2$), and let $DD=D^R_1\cup D\cup D^L_2$.  
Then Case III can be applied 
to the triple $D^L_1$, $DD$, $D^R_2$ getting that $DD$ is semialgebraic
bilipschitz equivalent to its linearization.  Furthermore $w(DD)\ge h(DD)$,
so this linearization is the graph of a lipschitz function.  In addition,
the support of the equivalence map is compact and projects to 
$D_1\cup D\cup D_2$.  

Case IV is exactly what we need to establish part (a) of the Theorem.

If the hypotheses of (a) hold for all exceptional rays, then it is 
immediate that $V$ is semialgebraic bilipschitz equivalent to the union
of various linearizations, all of which are lipschitz, and so are 
semialgebraic bilipschitz equivalent to $C$, which proves (b).
\end{proof}

\begin{theorem}{\bf Inner Equivalence.}\label{thm: inner}

Assume that $V\subset \R^3$ is a semialgebraic surface that is a $C^1$-manifold everywhere except $\0\in V$.  Assume further that $V = \Gamma f$ is a graph of a semialgebraic function $f: U \rightarrow \R$, continuous at $\0$, $U$ a semialgebraic neighborhood of $\0$, and that the tangent cone $C\equiv CV$ is the $xy$-plane. Then $V$ is 
semialgebraically inner bilipschitz equivalent to $C$.
\end{theorem}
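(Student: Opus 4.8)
The plan is to show that the vertical projection $\pi\colon V \to C$, $(x,y,z)\mapsto(x,y)$, is an inner bilipschitz homeomorphism after a suitable reparametrization, by reducing to the picture developed in the proof of Theorem \ref{thm:main-suff}. The key observation is that, while an exceptional ray with a full (or merely large) Nash fiber obstructs \emph{outer} bilipschitz equivalence to the plane via the length argument of Example \ref{ex:one}, Case III, it never obstructs \emph{inner} equivalence: the ``long thin fin'' over a region $D$ of width $\approx y^{1/w}$ and height $\approx y^{1/h}$ with $h<w$ has inner diameter comparable to its height, and we are free to stretch that region horizontally in the domain plane $C$ to make the two comparable. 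Concretely, I would first invoke the finiteness of exceptional rays and, around each one (WLOG the positive $y$-axis), use the wedge decomposition $\Gamma f|_W = P_1\cup\cdots\cup P_{2n+1}$ into flat ($FL$) and never-fast-increasing/decreasing ($NFI$/$NFD$) pieces already set up in Section 4. Away from all exceptional rays, $f$ is lipschitz (shrinking $U$), so $\pi$ there is even outer bilipschitz, and it suffices to treat a neighborhood of one exceptional ray.

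The heart of the argument is the following local model. Over a single sector $D = D(r_1,r_2,\epsilon)$ on which $f$ is increasing in $x$ (an $FI$ piece, or an $FL$ piece — the $FD$ case is symmetric), the surface $V_D = \Gamma f|_D$ is, as in Case II of the proof of Theorem \ref{thm:main-suff}, the graph of a \emph{lipschitz} function $\tilde f$ after a rotation about the $y$-axis, because the slopes in the $x$-direction lie in $[0,\infty]$ and vertical lines in the $\ell^\perp$-direction miss $TV_D$ and its Nash limit. Hence $V_D$ with the \emph{inner} metric is bilipschitz to a planar region $\tilde D$ (the slanted projection) with its inner metric. Now I would compose with a semialgebraic homeomorphism of the plane that straightens $\tilde D$ back onto $D$ — i.e. reparametrize $\tilde D\to D$ respecting the $y$-fibration, sending the arc $\tilde\alpha(y)$ to $r_1(y)$ and $\tilde\beta(y)$ to $r_2(y)$ affinely in the fiber direction. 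Because both $\tilde D$ and $D$ are ``horn-shaped'' regions of the form $\{(x,y): \rho_1(y)\le x\le\rho_2(y),\ 0\le y\le\epsilon\}$ with the widths $\rho_2(y)-\rho_1(y)$ controlled by a single Puiseux exponent on each, such a fiberwise-affine map is inner bilipschitz onto its image (this is the standard fact that a horn neighborhood is inner bilipschitz to a product; I would cite it or give the one-line estimate). Patching these maps over the consecutive pieces $P_1,\dots,P_{2n+1}$ along the common bounding arcs — and observing that inner distances transverse to the fibration only need to be matched up to constants, which the decomposition guarantees since each $P_i$ meets its neighbors in an arc — yields a semialgebraic inner bilipschitz homeomorphism $\Gamma f|_W \to W$.

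Finally I would assemble the global map: cover a punctured neighborhood of $\0$ by the wedges around each exceptional ray together with the complementary region where $f$ is lipschitz, glue the local inner bilipschitz maps (using that on overlaps both are fiberwise reparametrizations of $\pi$, hence differ by an inner bilipschitz self-map of a horn), and check that the resulting $H\colon V\to C$ is globally inner bilipschitz by the usual chaining argument — an inner-rectifiable path in $V$ is covered by finitely many pieces, its length is distorted by a bounded factor on each, and the number of pieces is uniformly bounded. Semialgebraicity is preserved throughout since every map used has semialgebraic graph.

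I expect the main obstacle to be the gluing step: one must ensure that the fiberwise-affine reparametrizations chosen independently on adjacent pieces $P_i$, $P_{i+1}$ agree (up to bilipschitz constants) along their common arc, and that the estimate ``inner distance $\approx$ Euclidean distance in the rescaled domain'' survives concatenation across pieces of very different widths and heights. This is exactly where the piece decomposition of Section 4 is doing the real work — without the control that each flat piece is at least as wide as the heights of its neighbors is \emph{not} needed here (that hypothesis was for the outer statement), but one still needs that consecutive pieces share an arc and that the $FI$/$FD$ slope conditions make the slanted-projection function genuinely lipschitz with a uniform constant — so the bookkeeping, rather than any single hard estimate, is the crux.
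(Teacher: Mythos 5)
Your overall strategy---decompose $\Gamma f$ into consecutive $FL$, $FI$, $FD$ pieces, replace each $FI$/$FD$ piece by its slanted projection as in Case II of the proof of Theorem \ref{thm:main-suff} so that it becomes a lipschitz graph over a planar horn, and then glue---is essentially a hands-on re-proof of the special case of Birbrair's H\"older complex classification that the paper simply cites: the paper assigns to each $FL$ piece the exponent $1/w$ (reciprocal width), to each $FI$/$FD$ piece the exponent $1/h$ (reciprocal height, obtained from exactly your slanted projection), reads off a cyclic H\"older complex for $V$, and realizes the identical complex by subdividing the $xy$-plane into sectors of the corresponding widths.

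There is, however, a genuine error in your local step. You send the slanted shadow $\tilde D$ back onto the vertical shadow $D$ by a fiberwise-affine map and assert that this is inner bilipschitz because ``a horn neighborhood is inner bilipschitz to a product.'' That fact is false, and the assertion fails exactly where it matters. For an $FI$ piece the fiber of $\tilde D$ at level $y$ has length $\approx y^{1/h}$, while the fiber of $D$ has length $\approx y^{1/w}$; whenever $h>w$ (which is the situation of the tall thin fins over an exceptional ray, as in Example \ref{ex:one}, Case III) the fiberwise-affine map compresses intra-fiber distances by the factor $y^{1/w-1/h}\rightarrow 0$, so its inverse is not lipschitz. In fact no map at all can carry such a piece inner-bilipschitzly onto its vertical projection: the exponent of a H\"older triangle is an inner bilipschitz invariant, which is precisely the content of the Birbrair classification the paper invokes. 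The repair is to drop the requirement that each piece land on its own shadow: send each $FI$/$FD$ piece to a plane sector whose width equals the piece's \emph{height}, and retile $\R^2$ by a subdivision realizing the resulting cyclic sequence of widths (possible because the exponent-one sectors away from the exceptional rays absorb the slack). With that correction your gluing and chaining arguments go through and you recover the paper's argument; your closing observation that well-separatedness is not needed for the inner statement is correct.
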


\begin{proof}
We will use the local bilipschitz classification given in 
\cite{B}.  There it is shown that two semialgebraic surfaces are semialgebraically
inner bilipschitz equivalent if they have the same (or combinatorially equivalent)
H\"older complexes.  A H\"older complex is the pair of a graph and a function associating
to each edge $e$ a rational number $\beta(e)\ge 1$.  The edges correspond to plane regions between two
arcs from 0 so that the width of the plane region is the reciprocal of the rational number
associated to the edge.  

$V$ is the consecutive union of $n$ pieces which are either FL, FI or FD, and each FI or FD piece is bounded by arcs tangent to an exceptional ray $\ell$. Each of these pieces gives one edge in a cyclic graph.  Each FL piece is 
semialgebraically lipschitz equivalent to the projection $U$ of that piece in the $xy$-plane,
and we let $\beta(e)=1/w(U)$.  Each FI or FD piece is semialgebraically lipschitz equivalent to the projection $U$ of that piece in the plane spanned by $\ell$ and the $z$-axis.  The
height of that piece is the width of $U$, and is the reciprocal of $\beta(e)>1$ for the 
corresponding edge $e$.  This gives a H\"older complex for $V$.

Using the same cyclic sequence of numbers $w_1,\dots,w_n$ (reciprocals of the $\beta(e_i)$'s),
we can subdivide the $xy$-plane into consecutive pieces of these widths.  So $V$ and $\R^2$ have combinatorially equivalent H\"older complexes, so are semialgebraically inner bilipschitz equivalent.
\end{proof}

\begin {thebibliography}{BFN1}

\bibitem{B} L. Birbrair, {\it Local bi-lipschitz classification of 2-dimensional 
semialgebraic sets.}  Houston J. Math., {\bf 25} (1999) 453-472.

\bibitem{BM} L. Birbrair, T. Mostowski, {\it Normal embeddings of semialgebraic sets.}  Michigan Math. J., {\bf 47} (2000) 125-132.

\bibitem{BFN3} L. Birbrair, A. Fernandes, W. Neumann, {\it Separating sets, metric tangent cone and applications for complex algebraic germs.} Selecta Math. (N.S.) 16 (2010), no. 3, 377�-391.

\bibitem{BNP} L. Birbrair, W. Neumann, A. Pichon, {\it The thick-thin decomposition and the bi-Lipschitz classification of normal surface singularities.} Acta Mathematica, {\bf 212} (2014) 199-256.

\bibitem{FW} M. Ferraroti, L. Wilson, {\it Remarks on the generalized Hestenes's lemma.}  Rocky Mountain J. Math. {\bf 38} no. 2 (2008) 461-469.

\bibitem{LH} J.-P. Henry, L\^e D\~ung Tr\'ang,  {\it Limites d'espaces tangents\/},
in {\it S\'eminaire Norguet}, Lecture Notes in Math.  {\bf 482}, Springer, 1975.

\bibitem{L} L\^e D\~ung Tr\'ang, {\it Limites d'espaces tangents sur les surfaces\/}, 
Nova Acta
Leopoldina, N.F.  {\bf 52}, Nr. 240 (1981) 119-137. 

\bibitem{LT} L\^e D\~ung Tr\'ang, B. Teissier, {\it Limites 
d'espaces tangents en 
g\'eometrie analytique}, Comment. Math. Helv.  {\bf 63} (1988) 
540-578.


\bibitem{OW} D. O'Shea,  L. Wilson.  {\it Limits of Tangent Spaces to Real Surfaces}.  Amer. J. Math, 126 (2004) 951-980.  

\bibitem{S}J.E. Sampaio, {\it Bi-lipschitz homeomorphic subanalytic sets have bi-lipschitz homeomorphic tangent cones}, Selecta Math. {\bf 22} (2016) pp. 53-59. 

\bibitem{T} J.C. Tougeron, {\it Ideaux des Fonctions Differentiables}, Berlin: Springer, 1972.

\bibitem{W} H. Whitney, {\it Tangents to an analytic variety}, Ann. of Math. {\bf 81} (1965) 496-549.

\end{thebibliography}

\end{document}